\def\newaliasedtheorem#1[#2]#3{
	\newaliascnt{#1@alt}{#2}
	\newtheorem{#1}[#1@alt]{#3}
	\expandafter\newcommand\csname #1@altname\endcsname{#3}
}
\theoremstyle{plain}
\newtheorem{theorem}{Theorem}[section]
\theoremstyle{definition}
\theoremstyle{remark}
\newcommand{\R}{\mathbb{R}}
\newcommand{\F}{\mathbb{F}}
\newcommand{\PP}{\mathbb{P}}
\newcommand{\C}{\mathbb{C}}
\newcommand{\E}{\mathbb{E}}
\newcommand{\ee}{\mathcal{E}}
\let\altphi\phi
\let\phi\varphi
\let\varphi\altphi
\let\altphi\undefined
\newcommand{\D}{\mathscr{D}}
\newcommand{\average}{{\mathchoice {\kern1ex\vcenter{\hrule height.4pt
width 6pt
depth0pt} \kern-9.7pt} {\kern1ex\vcenter{\hrule height.4pt width 4.3pt
depth0pt}
\kern-7pt} {} {} }}
\address{\textsc{Daniela Di Donato}: 
Universit\'a di Pavia, Dipartimento di Matematica, Via Adolfo Ferrata, 5, 27100 Pavia, Italy}
\email{daniela.didonato@unipv.it}
\thanks{
	D.D.D. is supported by PRIN 2022 "\emph{Inverse problems in PDE: theoretical and numerical analysis}" n. 2022B32J5C funded by MUR, Italy, and by the European Union – Next Generation EU}
\title{Hamilton-Jacobi equations involving a Caputo time-fractional derivative}
\date{\today}
\author{ Daniela Di Donato }
\begin{document}

\begin{abstract}
 We prove a representation formula of intrinsic Hopf-Lax type for subsolutions to Hamilton-Jacobi equations involving a Caputo time-fractional derivative.

		\end{abstract}

\subjclass{Primary 70H20, 34K37}

\keywords{Hopf-Lax formula, Hamilton-Jacobi equation, Caputo time-fractional derivative}

\maketitle 

\section{Introduction} 	
Consider the Cauchy problem for Hamilton–Jacobi equations 
\begin{equation}\label{cauchypb8}
\left\{
\begin{array}{l}
\partial _{(0,t]}^\beta v  +H( Dv)=0, \qquad \mbox{in } \R^\kappa \times (0,\infty ), \\ 
v (y,0)=h(y), \qquad \,\,\, \qquad \mbox{on } \R^\kappa \times \{t=0\}, \\ 
\end{array}
\right.
\end{equation}
where $h :\R^\kappa \to \R$ is Lipschitz continuous and bounded, $H:\R^\kappa \to \R$ is convex and such that $\lim_{|p|\to \infty} \frac{H(p)}{|p|}=+\infty $ and 
\begin{equation}\label{equationBETA}
    \partial _{(0,t]}^\beta v (x,t)=\frac{1}{\Gamma (1-\beta )} \int_0^t \frac{\partial _\tau v(x,\tau )}{(t-\tau )^\beta }\, d\tau
\end{equation}
is the Caputo time-fractional derivative of order $\beta \in (0,1)$ of $v:\R^\kappa \times \R^+ \to \R.$ Since the Caputo fractional derivative is defined in an integral form, it is a non-local operator. It has 'memory'  property, which means that a present state depends on past states. Derivatives and integrals to non integer orders are part of the fractional calculus and it represents a powerful tool in applied mathematics to study a myriad of problems from different fields of science and engineering, with many break-through results found in mathematical physics, finance, hydrology, biophysics, thermodynamics, control theory, statistical mechanics, astrophysics, cosmology and bioengineering (see \cite{P99, KST06}). A theory of viscosity solutions for a general class of Hamilton-Jacobi equations with Caputo time-fractional derivative have been recently developed in \cite{GN17A, TY17}.

In \cite{CMI19}, the authors show that the value function
\begin{equation*}
    v_\beta (x,t)= \E _{x,t} \left[ \min_{y\in \R^\kappa } \left( E_tL\left( \frac{x-y}{E_t}\right) +g(y)\right)\right]
\end{equation*}
is a.e. solution of the problem \eqref{cauchypb8}. Here, $L:\R^\kappa \to \R$ is the Legendre transform of $H$ defined by $L(q)=\sup_p\{pq - H(p)\}$ and  $E_t$ is a continuous, non decreasing stochastic process defined as the inverse of a $\beta$-stable subordinator $D_t,$ i.e., $E_t:=\inf\{\tau >0\,:\, D_\tau >t\}$ for $t\geq 0.$ The process $X(s)=Y(E_s)$ solves the stochastic differential equation
\begin{equation}\label{cauchypb1.7}
\left\{
\begin{array}{l}
dX(s)=\alpha (s) dE_s, \\ 
X(0)=\xi , \\ 
\end{array}
\right.
\end{equation}
where $Y$ is a $C^1$ function that satisfy the constraint $Y(t) =x, \alpha (s)=\dot Y(E_s)$ and $\xi=Y(0).$

Aim of this paper is to prove a representation formula of intrinsic Hopf-Lax type for subsolutions to Hamilton-Jacobi equation involving a Caputo time-fractional derivative. More precisely, our main objective is to introduce and to analyze   here  the map $u:Y \times \R^+ \to \R$ with $Y\subset \R^\kappa$ compact defined as 
\begin{equation*}
u(x,t) =  \E _{x,t} \left[ \inf_{z\in Y} \left\{ E_t L\left( \frac{f(x)-\pi^{-1}(z)}{E_t}\right) + g(z) \right\}\right],
\end{equation*}
where $f=(f_1, \dots , f_\kappa):Y\to \R^\kappa$ is a continuous section of a quotient map $\pi:\R ^\kappa \to Y$ (see Definition \ref{def_ILS}), $  
g:= \max _{j=1,\dots , \kappa } f_j $ and $L:\R^\kappa \to \R $ is a suitable continuous map. In additional, $f(y)- \pi^{-1}(z) =f(y)-a\in \R^\kappa$ such that $d(f(y), \pi^{-1}(z))=d(f(y),a)$ and,  as above, $E_t$ is a continuous, non decreasing stochastic process defined as the inverse of a $\beta$-stable subordinator $D_t.$ In 
 Theorem \ref{Teorema3.2},  we prove that 
\begin{equation*}
    \partial ^\beta _{(0,t]}u (x,t) + H_1(Du (x,t)) \leq 0,
\end{equation*} where $ \partial ^\beta _{(0,t]}u$ is the Caputo time-fractional derivative of order $\beta \in (0,1)$ of $u$ and 
\begin{equation*}
   H_1(Du (x,t)) := \max_{ q \in Y}  \{Du (x,t) \cdot q -C \sqrt{L}(f(q)) \},
\end{equation*} 
 for some fixed $C>0.$ 

\section{Notation and preliminary results}\label{Intrinsically   Lipschitz  sections.21}
\subsection{Intrinsically   Lipschitz  sections}
 In \cite{DDLD21}, we give the following notion. 
\begin{defi}\label{def_ILS} 
Let $(X,d)$ be a metric space, $Y$ be a topological space and $\pi:X\to Y$ be a quotient map, i.e.,  it is continuous, open, and surjective. We say that a map $\phi :Y \to X$ is a section of $\pi $ if 
\begin{equation*}\label{equation1}
\pi \circ \phi =\mbox{id}_Y.
\end{equation*}
Moreover, we say that a map $\phi:Y\to X$ is an {\em intrinsically Lipschitz section of $\pi$ with constant $\ell$},  with $\ell \in[1,\infty)$, if in addition
\begin{equation*}\label{equationFINITA}
d(\phi (y_1), \phi (y_2)) \leq \ell d(\phi (y_1), \pi ^{-1} (y_2)), \quad \mbox{for all } y_1, y_2 \in Y.
\end{equation*}
Here $d$ denotes the distance on $X$, and, as usual, for a subset $A\subset X$ and a point $x\in X$, we have
$d(x,A):=\inf\{d(x,a):a\in A\}$.
\end{defi}

We notice that if $Y$ is compact, we get that \begin{equation}\label{costuniversale}
K:= \sup _{y_1,y_2 \in Y}  d(\phi (y_1), \pi ^{-1} (y_2)) <+ \infty .
\end{equation}

We underline that, in the case  $ \pi$ is a Lipschitz quotient or submetry \cite{MR1736929, Berestovski},  being intrinsically Lipschitz  is equivalent to biLipschitz embedding, see Proposition 2.4 in \cite{DDLD21}. However, we recall some examples of continuous sections and intrinsically Lipschitz sections.

  \begin{enumerate}
   \item Let the general linear group $X=GL(n,\R)$ or $X=GL(n,\C)$ of degree $n$ which is the set of $n\times n$ invertible matrices, together with the operation of ordinary matrix multiplication. We consider $Y=\R^*= GL(n,\R)/SL(n,\R)$ or $Y=\C^*= GL(n,\C)/SL(n,\C)$ where the special linear group $SL(n,\R)$ (or $SL(n,\C)$)  is the subgroup of $GL(n,\R)$ (or $GL(n,\C)$) consisting of matrices with determinant of $1$. Here the linear map $\pi = det : GL(n, \R)\to \R^*$ is a surjective homomorphism where $Ker (\pi) =SL(n,\R).$
\item Let $X=GL(n,\R)$ as above and $Y=GL(n,\R)/O(n,\R)$ where $O(n,\R)$ is the orthogonal group in dimension $n.$ Recall that $Y$ is diffeomorphic to the space of upper-triangular matrices with positive entries on the diagonal, the natural map $\pi: X\to Y$ is linear. 
\item Let $X=\R^{2\kappa }, Y=\R$ endowed with the Euclidean  distance and $\pi:\R^{2\kappa } \to \R$ defined as $\pi ((x_1, \dots ,x_{2\kappa })):= x_1+ \dots +x_{2\kappa }$ for any $(x_1,\dots, x_{2\kappa }) \in \R^{2\kappa }.$  An easy example of sections of $\pi$ is the following one: let $\phi :\R \to \R^{2\kappa }$ given by  $$\phi (y)=(y+ a_1 f_1(y), -a_1 f_1(y), a_2f_2(y), -a_2 f_2(y),\dots , a_{\kappa } f_{\kappa }(y), -a_{\kappa }f_{\kappa}(y)), \quad \forall  y\in \R,$$  where $a_i \in \R$ and $f_i :\R \to \R$ are  continuous maps for any $i=1,\dots , \kappa $. 
\item Regarding examples of intrinsically Lipschitz sections the reader can see \cite[Example 4.58]{SC16}.
\end{enumerate}

\subsection{The subordinator process}
Fixed a filtered probability space $(\Omega , \F ,\F_t , \PP)$, let $\{D_\tau\}_{\tau \geq 0 }$ be a stable subordinator of order $\beta \in (0,1)$, i.e. a one-dimensional, non decreasing L\'evy process whose probability density function (PDF) has Laplace transform equal to $e ^{-\tau s ^\beta }$. The inverse stable process $\{E_t\}_{  t \geq 0 }$, defined as the first passage time of the process $D_\tau $ over the level $t$, i.e.
$$E_t := \inf \{\tau  > 0 : D_\tau > t \},$$
has sample paths which are continuous, non decreasing and such that $E_0=0$, $E_t\to +\infty $ as $t\to +\infty$. It is worthwhile to observe that $E_t$ does not have stationary or independent increments. The process $E_t$ can be used to model systems with two time scales: a deterministic one given by the standard time t, referred to an external observer, and a stochastic one given by $E_t$, internal to the physical process (see \cite{MGZ14, MS04, MS13}).

We recall some basic properties of the process $E_t$ which we will exploit in the following.
\begin{prop}
    For $t >0$, it holds:
    \begin{enumerate}
\item[ (i)]  For any $\lambda >0$ there exists a non negative constant $C(\lambda , \beta)$ such that
        \begin{equation}\label{equ2.1}
           \E [E^\lambda _t]= C(\lambda , \beta )t^ {\lambda \beta},
        \end{equation}
        with $C(\lambda , \beta )= \frac{\Gamma (\lambda +1)}{\Gamma (\lambda \beta + 1)}.$
        \item[ (ii)] The process $E_t$ has PDF
\begin{equation}\label{equationPDF}
    \ee _\beta (s,t)=\frac{t}{\beta }s^{-1-\frac{1}{\beta }} \D _\beta (ts^{-1/\beta }),
\end{equation}
        where  $\D_\beta $ is the PDF of $D_1.$
            \end{enumerate}
\end{prop}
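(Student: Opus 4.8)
The plan is to extract both statements from two structural facts about the subordinator: its self-similarity and its duality with the inverse process. First I would record the scaling relation $D_\tau \stackrel{d}{=} \tau^{1/\beta} D_1$, which is immediate from the defining Laplace transform, since $\E[e^{-s\,\tau^{1/\beta}D_1}] = e^{-(\tau^{1/\beta}s)^\beta} = e^{-\tau s^\beta} = \E[e^{-sD_\tau}]$ and the Laplace transform determines the law on $[0,\infty)$. Second I would record the duality
\[
\{E_t > \tau\} = \{D_\tau \le t\},\qquad \tau,t>0,
\]
which follows from the definition $E_t=\inf\{\tau>0: D_\tau>t\}$ together with the monotonicity and right-continuity of the sample paths of $D$: indeed $E_t>\tau$ means $D_\sigma\le t$ for every $\sigma\le\tau$, and by monotonicity this reduces to the single condition $D_\tau\le t$. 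Since $D_1$ (hence each $D_\tau$) has an absolutely continuous law, atoms are irrelevant and strict versus non-strict inequalities may be interchanged freely.

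For (ii), combining the two facts yields the distribution function of $E_t$ directly. Writing $F_{D_1}$ for the CDF of $D_1$ and using the duality followed by scaling,
\[
\PP(E_t\le s) = 1-\PP(D_s\le t) = 1-\PP(D_1\le t\,s^{-1/\beta}) = 1-F_{D_1}(t\,s^{-1/\beta}).
\]
Differentiating in $s$, using $\frac{\partial}{\partial s}(t\,s^{-1/\beta}) = -\tfrac{t}{\beta}s^{-1-1/\beta}$ together with $F_{D_1}' = \D_\beta$, gives
\[
\ee_\beta(s,t) = \frac{\partial}{\partial s}\PP(E_t\le s) = \frac{t}{\beta}\,s^{-1-\frac1\beta}\,\D_\beta\!\left(t\,s^{-1/\beta}\right),
\]
which is exactly \eqref{equationPDF}; along the way one checks that this density is non-negative and integrates to $1$.

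For (i), I would integrate $s^\lambda$ against the density just obtained and change variables by $u = t\,s^{-1/\beta}$ (so $s = t^\beta u^{-\beta}$). A direct bookkeeping of the powers of $t$ and $u$ collapses the integral to
\[
\E[E_t^\lambda] = \int_0^\infty s^\lambda\,\ee_\beta(s,t)\,ds = t^{\lambda\beta}\int_0^\infty u^{-\lambda\beta}\,\D_\beta(u)\,du = t^{\lambda\beta}\,\E\!\left[D_1^{-\lambda\beta}\right],
\]
which already produces the asserted $t^{\lambda\beta}$ scaling and identifies $C(\lambda,\beta)=\E[D_1^{-\lambda\beta}]$. To evaluate this negative moment I would use the elementary identity $x^{-\rho}=\frac{1}{\Gamma(\rho)}\int_0^\infty \sigma^{\rho-1}e^{-\sigma x}\,d\sigma$ with $\rho=\lambda\beta$, apply it to $x=D_1$, take expectations and invoke Fubini to bring in the Laplace transform $\E[e^{-\sigma D_1}]=e^{-\sigma^\beta}$; the substitution $r=\sigma^\beta$ then reduces the remaining integral to a Gamma function, giving
\[
\E[D_1^{-\rho}] = \frac{1}{\Gamma(\rho)}\int_0^\infty \sigma^{\rho-1}e^{-\sigma^\beta}\,d\sigma = \frac{\Gamma(\rho/\beta)}{\beta\,\Gamma(\rho)} = \frac{\Gamma(\rho/\beta+1)}{\Gamma(\rho+1)}.
\]
Setting $\rho=\lambda\beta$ gives $C(\lambda,\beta)=\Gamma(\lambda+1)/\Gamma(\lambda\beta+1)$, as claimed.

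The conceptual steps are short; the main obstacles are the analytic justifications. I expect the delicate point to be the finiteness and manipulation of the negative moment $\E[D_1^{-\lambda\beta}]$: one must confirm that $\D_\beta$ decays fast enough near $0$ (in fact like $\exp(-c\,u^{-\beta/(1-\beta)})$) for $u^{-\lambda\beta}\D_\beta(u)$ to be integrable there, while the heavy tail $\D_\beta(u)\sim c\,u^{-1-\beta}$ secures integrability at $\infty$. This decay also legitimises the Fubini exchange and, in an integration-by-parts variant of the argument, the vanishing of the boundary terms as $s\to 0$ and $s\to\infty$. Once integrability is established, every remaining step is a routine change of variables.
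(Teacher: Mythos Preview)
Your argument is correct in all essentials: the scaling identity for $D_\tau$, the duality $\{E_t>\tau\}=\{D_\tau\le t\}$ modulo the null event $\{D_\tau=t\}$, the differentiation yielding \eqref{equationPDF}, the change of variables $u=ts^{-1/\beta}$ producing $\E[E_t^\lambda]=t^{\lambda\beta}\E[D_1^{-\lambda\beta}]$, and the Laplace-transform evaluation of the negative moment all check out line by line.

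As for comparison with the paper: there is nothing to compare. The paper does not prove this proposition; it is introduced with ``We recall some basic properties of the process $E_t$'' and simply stated as background, with the surrounding references to \cite{MS04, MS13} implicitly covering it. So you have supplied a full self-contained proof where the paper offers none. Your approach is the standard one found in the cited literature (e.g.\ Meerschaert--Scheffler), and the analytic caveats you flag---integrability of $u^{-\lambda\beta}\D_\beta(u)$ near $0$ via the stretched-exponential decay of $\D_\beta$, and the justification of Fubini---are exactly the right points to note.
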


For the proof of the following result we refer to \cite{MS13}.
\begin{prop}
    The function $\ee _\beta (\cdot , t)$ is a weak solution, in sense of distributions, of
    \begin{equation}\label{Prop2.2}
        \partial ^\beta _{(0,t]} \ee _\beta (r,t) = -\partial _r \ee _\beta (r,t) - \frac{t^\beta }{\Gamma (1-\beta )} \delta _0(r), \quad r\in (0,+\infty).
    \end{equation}
\end{prop}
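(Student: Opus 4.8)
The plan is to verify the distributional identity \eqref{Prop2.2} by passing to Laplace transforms in both variables, exploiting that $\ee_\beta(\cdot,t)$ is a probability density whose spatial Laplace transform is explicitly known. Write $\widetilde{\ee}_\beta(s,t):=\int_0^\infty e^{-sr}\ee_\beta(r,t)\,dr=\E[e^{-sE_t}]$. Expanding the exponential and inserting the moment formula \eqref{equ2.1} with $\lambda=n$,
\[
\widetilde{\ee}_\beta(s,t)=\sum_{n=0}^\infty\frac{(-s)^n}{n!}\,\E[E_t^n]=\sum_{n=0}^\infty\frac{(-st^\beta)^n}{\Gamma(n\beta+1)}=E_\beta(-st^\beta),
\]
the Mittag-Leffler function; the interchange of sum and expectation is justified by the super-exponential decay of $1/\Gamma(n\beta+1)$. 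Two facts about this function drive the computation: it solves the scalar Caputo relaxation equation $\partial^\beta_{(0,t]}E_\beta(-st^\beta)=-s\,E_\beta(-st^\beta)$ with $E_\beta(0)=1$, and its Laplace transform in $t$ is $\int_0^\infty e^{-\lambda t}E_\beta(-st^\beta)\,dt=\lambda^{\beta-1}/(\lambda^\beta+s)$.

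Next I would apply the spatial transform to the whole of \eqref{Prop2.2}, reading each side as a distribution in $r\in(0,\infty)$ with the endpoint $r=0$ included. On the left, the transform in $r$ commutes with $\partial^\beta_{(0,t]}$, so by the relaxation identity the left side becomes $\partial^\beta_{(0,t]}\widetilde{\ee}_\beta(s,t)=-s\,\widetilde{\ee}_\beta(s,t)$. On the right, integration by parts gives $\int_0^\infty e^{-sr}\big(-\partial_r\ee_\beta(r,t)\big)\,dr=-s\,\widetilde{\ee}_\beta(s,t)+\ee_\beta(0^+,t)$, while the singular term contributes $-[\text{coeff}]$ via $\int_0^\infty e^{-sr}\delta_0(r)\,dr=1$. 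Matching the two sides thus reduces the entire proposition to the single scalar statement that the boundary value $\ee_\beta(0^+,t)$ equals the coefficient of $\delta_0(r)$ in \eqref{Prop2.2}.

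The decisive computation is therefore that boundary value. From the self-similar representation \eqref{equationPDF}, $\ee_\beta(r,t)=\frac{t}{\beta}r^{-1-1/\beta}\D_\beta(tr^{-1/\beta})$, and letting $r\to0^+$ forces the argument $tr^{-1/\beta}\to+\infty$; inserting the heavy-tail asymptotics of the stable density $\D_\beta(x)\sim\frac{\beta}{\Gamma(1-\beta)}x^{-\beta-1}$ as $x\to+\infty$, the powers of $r$ cancel exactly and yield $\ee_\beta(0^+,t)=\frac{t^{-\beta}}{\Gamma(1-\beta)}$, matching the coefficient of the singular source in \eqref{Prop2.2}. Hence the boundary term and the $\delta_0$ term cancel, both transformed sides equal $-s\,\widetilde{\ee}_\beta(s,t)$, and by injectivity of the Laplace transform the identity holds for every $t>0$. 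As a consistency check one can repeat the argument with a second transform in $t$: using $\mathcal{L}_t[\partial^\beta_{(0,t]}f]=\lambda^\beta\hat f-\lambda^{\beta-1}f(0^+)$ together with the initial datum $\ee_\beta(\cdot,0^+)=\delta_0$, both sides collapse to $-s\lambda^{\beta-1}/(\lambda^\beta+s)$.

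The main obstacle is the rigorous treatment of the two boundary effects. First, the tail asymptotics of $\D_\beta$ and the resulting value of $\ee_\beta(0^+,t)$ must be controlled carefully enough to justify both the integration by parts and the limit $r\to0^+$; this is where the analytic heart of the statement lies, since the $\delta_0$ source in \eqref{Prop2.2} is precisely the obstruction to $\ee_\beta(\cdot,t)$ being a classical solution. Second, one must pin down the distributional meaning of $\delta_0(r)$ at the endpoint of the half-line, so that $\int_0^\infty e^{-sr}\delta_0(r)\,dr=1$ is legitimate; I would make this precise by testing against $\psi\in C_c^\infty([0,\infty))$ rather than relying on the formal transform, which turns the cancellation above into the genuine weak formulation claimed in the proposition.
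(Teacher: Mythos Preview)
The paper does not give its own proof of this proposition; the sentence immediately preceding it reads ``For the proof of the following result we refer to \cite{MS13}.'' Your Laplace-transform argument---compute the spatial transform as the Mittag--Leffler function, invoke the Caputo relaxation identity, and reduce everything to the boundary value $\ee_\beta(0^+,t)$---is the standard route and is essentially what one finds in that reference, so there is no methodological comparison to make.

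One point does deserve comment. Your tail computation correctly yields $\ee_\beta(0^+,t)=t^{-\beta}/\Gamma(1-\beta)$, but you then say this ``match[es] the coefficient of the singular source in \eqref{Prop2.2}.'' It does not: the paper prints $t^{\beta}/\Gamma(1-\beta)$, which is a typo. The correct exponent is $-\beta$, as is clear both from your asymptotic calculation and from the identity $\partial^\beta_{(0,t]}f = D^\beta_{(0,t]}f - \tfrac{t^{-\beta}}{\Gamma(1-\beta)}f(0^+)$ relating the Caputo and Riemann--Liouville derivatives together with $\ee_\beta(\cdot,0^+)=\delta_0$. Your argument in fact detects and corrects this error; you should say so explicitly rather than assert a match.
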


\subsection{The intrinsic Hopf-Lax formula}
Taking inspiration from \cite[Section 3]{E10} and \cite{CMI19}, we consider the intrinsic Hopf-Lax formula, i.e. the map $u_\beta :Y \times \R^+ \to \R$ with $Y\subset \R^\kappa$ compact defined as 
\begin{equation}\label{equationu21.24}
u _\beta (x,t) =  \E _{x,t} \left[ \inf_{z\in Y} \left\{ E_t L\left( \frac{f(x)-\pi^{-1}(z)}{E_t}\right) + g(z) \right\}\right],
\end{equation}
where $f=(f_1, \dots , f_\kappa):Y\to \R^\kappa$ is a continuous section of a quotient map $\pi:\R ^\kappa \to Y$ and \begin{equation*}
g(y):= \max _{j=1,\dots , \kappa } f_j(y), \quad \forall y\in Y.
\end{equation*}
Here, $f(y)- \pi^{-1}(z) =f(y)-a\in \R^\kappa$ such that $d(f(y), \pi^{-1}(z))=d(f(y),a).$ Moreover, from now on we will assume the continuous map $L:\R^\kappa \to \R $ satisfies the following  properties:
\begin{itemize}
\item It holds
\begin{equation}\label{equationPorpL}
\begin{aligned}
 \E _{x,t} \left[  E_t L\left( \frac{f(x)-\pi^{-1}(z)}{E_t}\right) \right] - \E _{x,t} \left[ E_t L\left( \frac{f(y)-\pi^{-1}(z)}{E_t}\right) \right] \leq \E _{x,t} \left[ C  \sqrt{L}\left( \frac{f(x)-f(y)}{E_t}\right)\right],
\end{aligned}
\end{equation} for any $x,y ,z \in Y$ and $t>0$ where  $C=C(K)>0$ where $K>0$ is given by \eqref{costuniversale}.   
 \item If $0<s<t,$  we ask
\begin{equation}\label{equationPorpL.6giugno}
 \E _{x,t} \left[  E_t L\left( \frac{f(x)-\pi^{-1}(z)}{E_t}\right) \right] \leq  \E _{x,t} \left[  E_s L\left( \frac{f(x)-\pi^{-1}(z)}{E_s}\right) \right],
\end{equation} for any $x,z \in Y.$ 
\end{itemize}

An example of a map that satisfies these properties is $L(v)=1/2|v|^2,$ for every $v\in \R^\kappa .$ Indeed, it is easy to check that  $d(f(x),\pi^{-1}(z)) \leq d(f(x),f(y)) + d(f(y),\pi^{-1}(z)), \forall x,y,z \in Y$  and so since $Y$ is compact, then 
\begin{equation*}
d^2(f(x),\pi^{-1}(z)) - d^2(f(y)-\pi^{-1}(z)) \leq  2K d(f(y),f(x)),
\end{equation*}
 for any $x,y, z \in Y,$ where $K>0$ is given by \eqref{costuniversale} and so \eqref{equationPorpL} holds. On the other hand, \eqref{equationPorpL.6giugno} is true because $E_t$ is non decreasing.

Our first result is the following.
\begin{prop}\label{prop2.6}
    The function $u _\beta $ defined as in \eqref{equationu21.24} satisfies the following properties:
    \begin{enumerate}
   \item It holds \begin{equation*}
|u _\beta (x,t) -u _\beta (y,t)| \leq  C \max\left( \sqrt L\left(\frac{f(x)-f(y)}{t}\right), \sqrt L\left(\frac{f(y)-f(x)}{t}\right) \right), \quad \forall x,y \in Y, \, t>0, \end{equation*} where $C>0$ is given by \eqref{equationPorpL}.
\item If $f$ is an intrinsically $\ell$-Lipschitz map, then there exists a constant $c>0$ such that for all $(x,t)\in Y\times (0,+\infty )$
\begin{equation*}
    |u (x,t)-g(x)| \leq c t^\beta .
\end{equation*} Hence, \begin{equation*}
u_\beta =g, \quad \mbox{ on } Y\times \{t=0\}.
\end{equation*}
\item  $u_\beta (x, \cdot )$ is locally $\beta$-H\"older continuous. 
\end{enumerate}

\end{prop}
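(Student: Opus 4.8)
The plan is to establish the three assertions in the order they are stated, since the estimate in (1) feeds directly into the proof of (2), and the two together will give the Hölder continuity in (3).

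\textbf{Part (1).} First I would fix $x,y\in Y$ and $t>0$, and for any $\eps>0$ pick $z=z_\eps\in Y$ that is $\eps$-almost optimal for $u_\beta(y,t)$, i.e.\
\[
u_\beta(y,t)+\eps \;\geq\; \E_{x,t}\!\left[E_tL\!\left(\tfrac{f(y)-\pi^{-1}(z)}{E_t}\right)\right]+g(z).
\]
(The infimum is attained, or at least approached, because $Y$ is compact and $f$, $L$ are continuous; the compactness also guarantees the representative $a\in\pi^{-1}(z)$ realizing the distance exists.) Using this same $z$ as a competitor in the definition of $u_\beta(x,t)$, one gets
\[
u_\beta(x,t)-u_\beta(y,t)-\eps \;\leq\; \E_{x,t}\!\left[E_tL\!\left(\tfrac{f(x)-\pi^{-1}(z)}{E_t}\right)\right]-\E_{x,t}\!\left[E_tL\!\left(\tfrac{f(y)-\pi^{-1}(z)}{E_t}\right)\right],
\]
and the right-hand side is bounded by $\E_{x,t}[C\sqrt{L}((f(x)-f(y))/E_t)]$ by hypothesis \eqref{equationPorpL}. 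Then I would invoke \eqref{equationPorpL.6giugno}-type monotonicity (or rather a scaling/monotonicity property of $\sqrt L$ under $E_t$) together with $\E[E_t^0]=1$ from \eqref{equ2.1} to replace $E_t$ by the deterministic time $t$, obtaining $C\sqrt L((f(x)-f(y))/t)$; letting $\eps\to0$ and swapping the roles of $x$ and $y$ yields the stated bound with the $\max$. The one delicate point here is making the "replace $E_t$ by $t$" step rigorous: it requires either that $v\mapsto\sqrt{L}(v)$ be positively $1$-homogeneous (true for $L(v)=\frac12|v|^2$) or an explicit assumption of the form \eqref{equationPorpL.6giugno} for $\sqrt L$; I expect the paper intends the former.

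\textbf{Part (2).} Take $z=x$ as a competitor, so $u_\beta(x,t)\leq \E_{x,t}[E_tL(0)]+g(x)$, hence $u_\beta(x,t)-g(x)\leq L(0)\,\E[E_t]=L(0)C(1,\beta)t^\beta$ by \eqref{equ2.1}. For the lower bound, use that $f$ is intrinsically $\ell$-Lipschitz, which controls $|f(x)-\pi^{-1}(z)|=d(f(x),\pi^{-1}(z))$ in terms of $d(f(x),\pi^{-1}(z))$ and relates $g(z)$ to $g(x)$: since $g=\max_j f_j$ is Lipschitz on the (compact) range, $|g(z)-g(x)|\le \mathrm{Lip}(g)\,|f(z)-f(x)|\le \ell\,\mathrm{Lip}(g)\,d(f(x),\pi^{-1}(z))$; combined with a coercivity/lower bound on $L$ (implicit in the Legendre-transform setup, $L(v)\ge a|v|-b$) and the moment formula $\E[E_t]=C(1,\beta)t^\beta$, $\E[E_t^0]=1$, one shows $E_tL((f(x)-\pi^{-1}(z))/E_t)+g(z)-g(x)\ge -c\,E_t^\beta$-type terms in expectation, giving $u_\beta(x,t)-g(x)\ge -ct^\beta$. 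Then $t\to0^+$ forces $u_\beta(x,t)\to g(x)$, which is the boundary condition.

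\textbf{Part (3).} Fix $x$ and $0<s<t$. Using the monotonicity \eqref{equationPorpL.6giugno} and an almost-optimal $z$ for the smaller time, together with Part (2) applied at a shifted initial time (a semigroup-type / dynamic-programming comparison, comparing $u_\beta(x,t)$ with the value obtained by "stopping" at time $s$), one bounds $|u_\beta(x,t)-u_\beta(x,s)|$ by a constant times $|t^\beta-s^\beta|$ or $(t-s)^\beta$ on compact time intervals, which is exactly local $\beta$-Hölder continuity; the elementary inequality $t^\beta-s^\beta\le (t-s)^\beta$ for $0<s<t$ and $\beta\in(0,1)$ closes it. I expect Part (3) to be the main obstacle, because unlike the classical Hopf–Lax case there is no exact semigroup identity for $u_\beta$ (the process $E_t$ lacks stationary/independent increments, as noted after \eqref{equationPDF}), so the dynamic-programming comparison can only be one-sided inequalities and must be assembled carefully from \eqref{equationPorpL}, \eqref{equationPorpL.6giugno}, the moment bounds \eqref{equ2.1}, and Part (1); getting both directions of the Hölder estimate out of one-sided monotonicity is the crux.
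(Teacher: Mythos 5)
Your Parts (1) and (2) follow the paper's own proof almost verbatim: an (almost) optimal $z$ for $u_\beta(y,t)$ reused as a competitor for $u_\beta(x,t)$ plus hypothesis \eqref{equationPorpL} for (1), and the competitor $z=x$ together with the intrinsic Lipschitz bound on $g$ and the coercivity of $L$ (the paper phrases it through the Hamiltonian $H(w)=\sup_p\{wp-L(p)\}$ over $w\in[0,\ell]$, you through $L(v)\ge a|v|-b$; these are the same estimate) for (2). Your remark about the mismatch between $E_t$ inside the expectation and the deterministic $t$ in the stated bound of (1) is fair: the paper's proof also stops at $C\sqrt L\bigl((f(x)-f(y))/E_t\bigr)$ and silently identifies it with the claim, so your homogeneity fix is, if anything, more careful than the source.

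The genuine gap is the lower H\"older bound in Part (3). You correctly obtain $u_\beta(y,t)-u_\beta(y,s)\le 0$ from \eqref{equationPorpL.6giugno} with an almost-optimal $z$ at the smaller time, but for the reverse inequality you only gesture at ``Part (2) applied at a shifted initial time'' via a semigroup-type comparison, and you yourself observe that no semigroup identity is available because $E_t$ lacks stationary and independent increments. That route does not close. The paper's argument is more elementary and needs no dynamic programming at all: pick $z$ optimal for the \emph{larger} time $t$ and use it as a competitor at time $s$, so that
\begin{equation*}
u_\beta(y,t)-u_\beta(y,s)\;\ge\;\E_{y,t}\!\left[E_tL\!\left(\tfrac{f(y)-\pi^{-1}(z)}{E_t}\right)\right]-\E_{y,t}\!\left[E_sL\!\left(\tfrac{f(y)-\pi^{-1}(z)}{E_s}\right)\right]\;\ge\;-\E_{y,t}\!\left[(E_t-E_s)\,L\!\left(\tfrac{f(y)-\pi^{-1}(z)}{E_t}\right)\right],
\end{equation*}
then bound $L$ by a constant $c$ on the relevant compact set (here \eqref{costuniversale} and the restriction $s,t\in[h',h'']$ enter) and conclude with $\E[E_t-E_s]=C(1,\beta)(t^\beta-s^\beta)\le C(1,\beta)(t-s)^\beta$. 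Without this step, or some substitute for it, Part (3) is not proved.
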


\begin{rem}
   Proposition \ref{prop2.6} (3) states that  
 $\partial _t u _\beta (x,\cdot ) \in L^1_{loc} (\R^+)$ for every $x\in Y$. Moreover, the function $u$ is absolutely continuous with respect to $t$ and therefore the time-fractional derivative  $\partial _t u _{(0,t]}^\beta (x,t ) \in L^1_{loc} (0,T)$ is defined for a.e. $(x,t)\in Y\times \R^+$ (see for example \cite[Lemma 2.17]{LL18}). On the other hand, thanks to Proposition \ref{prop2.6} (1), we know that if $f$ is differentiable, then $u_\beta (\cdot , t)$ is so too. 
\end{rem}

\begin{proof}[Proof of Proposition \ref{prop2.6}]
$(1).$ Let $x,y \in Y$ and $t>0.$ We choose $z\in Y$ such that 
\begin{equation*}
u_\beta (y,t) = \E _{x,t} \left[  E_t L\left( \frac{f(y)-\pi^{-1}(z)}{E_t}\right) + g(z)\right].
\end{equation*}
and so by \eqref{equationPorpL} we get 
\begin{equation*}
\begin{aligned}
u_\beta (x,t) - u_\beta (y,t ) & = \E _{x,t} \left[  E_t L\left( \frac{f(x)-\pi^{-1}(z)}{E_t}\right) + g(z)\right] - \E _{x,t} \left[  E_t L\left( \frac{f(y)-\pi^{-1}(z)}{E_t}\right) + g(z)\right]\\
& \leq C\sqrt L\left(\frac{f(x)-f(y)}{E_t}\right).
\end{aligned}
\end{equation*}
Hence, interchanging the roles of $x$ and $y$  we have the thesis.

$(2).$  Let $x \in Y$ and $t>0.$ Choosing $z=x$ we obtain
\begin{equation}\label{equation29.3giugno}
u_\beta (x,t) \leq \E _{x,t} [E_tL(0) +g(x)] = C(1, \beta )L(0)t^\beta + g(x), 
\end{equation} where we used \eqref{equ2.1}. 
Moreover,
\begin{equation*}
\begin{aligned}
u_\beta (x,t)-g(x) & = \E _{x,t} \left[ \inf_{z\in Y} \left\{ E_t L\left( \frac{f(x)-\pi^{-1}(z)}{E_t}\right) + g(z) -g(x) \right\}\right] \\
& \geq  \E _{x,t} \left[ \inf_{z\in Y} \left\{ E_t L\left( \frac{f(x)-\pi^{-1}(z)}{E_t}\right) -\ell d(f(x),\pi^{-1}(z)) \right\}\right]\\
& =- \E _{x,t} \left[  E_t \max _{p= \frac{f(x)-\pi^{-1}(z)}{E_t}} \left\{ \ell |p| - L\left( p\right) \right\} \right]\\
& = - \max_{w\in [0, \ell ]}  H(w) \E _{x,t}  \left[  E_t \right]\\
& = - \max_{w\in [0, \ell ]}  H(w) C(1, \beta ) t^\beta ,
\end{aligned}
\end{equation*}  where in the first inequality we used the fact that if $f$  is an intrinsically $\ell$-Lipschitz section of $\pi,$ then $g$ is so too and in the last equality we used \eqref{equ2.1} again. Here $H$ is the Hamiltonian associated by $L$ in our intrinsic context (see \cite[Section 3.3]{D22.88}).

 Finally, putting together the last inequality and \eqref{equation29.3giugno}, it holds
\begin{equation*}
|u(x,t) - g(x)| \leq \max\{ |L(0)| , \max _{\xi \in [0,\ell ]} |H(\xi)|\} C(1, \beta  ) t^\beta  ,
\end{equation*}
which implies that $u=g$ on $Y \times \{ t=0 \},$ as desired.

$(3).$  First, we show that for any $s,t \in \R^+$ such that $s<t$
\begin{equation*}
u_\beta (y,t)-u_\beta (y,s) \leq 0,
\end{equation*}
for every $y\in Y$ and so 
\begin{equation}\label{servira1}
u_\beta (y,t)-u_\beta (y,s) \leq  C(t-s)^\beta .
\end{equation}
  Fix $y \in Y$ and $0<s<t.$ Choosing $z\in Y$ such that 
\begin{equation*}
u_\beta (y,s) = \E _{y,t} \left[  E_s L\left( \frac{f(y)-\pi^{-1}(z)}{E_s}\right) + g(z)\right].
\end{equation*}
By \eqref{equationPorpL.6giugno}, we deduce that
\begin{equation*}
\begin{aligned}
& u_\beta (y,t) - u_\beta (y,s )\\  
& \leq  \E _{y,t} \left[  E_t L\left( \frac{f(y)-\pi^{-1}(z)}{E_t}\right) + g(z)\right] - \E _{y,t} \left[  E_s L\left( \frac{f(y)-\pi^{-1}(z)}{E_s}\right) + g(z)\right] \\
& \leq 0,
\end{aligned}
\end{equation*} as wished. Finally, we prove that for any $s,t \in [h',h'']$ such that $s<t$
\begin{equation*}
u_\beta (y,t)-u_\beta (y,s) \geq -C(t-s)^\beta , 
\end{equation*}
for every $y$ belong to a compact subset of $Y.$ Choose $z\in Y$ such that
\begin{equation*}
u_\beta (y,t) = \E _{y,t} \left[  E_t L\left( \frac{f(y)-\pi^{-1}(z)}{E_t}\right) + g(z)\right].
\end{equation*}
It follows
\begin{equation*}
\begin{aligned}
u_\beta (y,t) - u_\beta (y,s )  & \geq  \E _{y,t} \left[  E_t L\left( \frac{f(y)-\pi^{-1}(z)}{E_t}\right) + g(z)\right] - \E _{y,t} \left[  E_s L\left( \frac{f(y)-\pi^{-1}(z)}{E_s}\right) + g(z)\right]\\
& \geq - \E _{y,t} \left[  (E_t-E_s) L\left( \frac{f(y)-\pi^{-1}(z)}{E_t}\right) \right]\\
& \geq -c \E _{y,t} \left[  (E_t-E_s)\right]\\
& = -c C(1, \beta ) (t-s)^\beta ,
\end{aligned}
\end{equation*} where in the last equality we used \eqref{equ2.1}. 
Now putting together the last inequality and \eqref{servira1}, the proposition is then proved. 
\end{proof}

\section{Subsolution of Hamilton-Jacobi equation involving a Caputo time-fractional derivative}
In this section we present the main result of this paper. Here, we prove that the map defined as in \eqref{equationu21.24} is a subsolution for Hamilton-Jacobi equation involving a Caputo time-fractional derivative defined as in \eqref{equationBETA}. The case when there is the classical derivative $\partial _t$ instead of $ \partial ^\beta _{(0,t]}$ is studied in  \cite{D22Hopf, D22Hopf2}.

Our goal is the following.
\begin{theorem}\label{Teorema3.2}
Let $Y\subset \R^\kappa$ be a compact set, $f:Y \to \R^\kappa$ be a sublinear section of a quotient map $\pi: \R^\kappa \to Y$ and let $u_\beta :Y \times \R^+ \to \R$ the map defined as \eqref{equationu21.24}. Given $(x,t) \in Y\times \R^+$ such that $Du_\beta (x, t)$ exists and $E_t \geq 1$, we have that for any $q\in Y$
\begin{equation*}
    \partial ^\beta _{(0,t]}u_\beta (x,t) +H_q(Du_\beta (x,t)) \leq 0,
\end{equation*} where $ \partial ^\beta _{(0,t]}u$ is the Caputo time-fractional derivative of order $\beta \in (0,1)$ of $u$ defined as in \eqref{equationBETA} and 
\begin{equation*}
   H_q(Du_\beta (x,t)) =  Du_\beta (x,t) \cdot q -C \sqrt{L}(f(q)),
\end{equation*} with $C>0$ given by \eqref{equationPorpL}.
\end{theorem}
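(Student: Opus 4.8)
The statement to be proved reads $\partial^\beta_{(0,t]}u_\beta(x,t)+Du_\beta(x,t)\cdot q-C\sqrt L(f(q))\le 0$, and the plan is to split it into a time estimate and a space estimate,
\begin{equation*}
\text{(i)}\quad \partial^\beta_{(0,t]}u_\beta(x,t)\le 0,\qquad\qquad
\text{(ii)}\quad Du_\beta(x,t)\cdot q\le C\sqrt L(f(q))\ \ \text{for every }q\in Y,
\end{equation*}
and then add them: since $-\partial^\beta_{(0,t]}u_\beta(x,t)\ge 0$, (i) and (ii) together yield the claim. The two hypotheses in the statement — that $Du_\beta(x,t)$ exists and that $E_t\ge 1$ — are used only in (ii).

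\textbf{Step (i).} I would read the time-monotonicity of $u_\beta$ off the proof of Proposition~\ref{prop2.6}(3): it is shown there, using only the standing assumption \eqref{equationPorpL.6giugno} on $L$, that $u_\beta(y,t)\le u_\beta(y,s)$ for $0<s<t$ and every $y\in Y$. Hence $\partial_\tau u_\beta(x,\tau)\le 0$ for a.e.\ $\tau>0$, and since $\tau\mapsto u_\beta(x,\tau)$ is locally absolutely continuous — so that the Caputo derivative is the classical integral \eqref{equationBETA}, cf.\ the Remark after Proposition~\ref{prop2.6} — one gets $\partial^\beta_{(0,t]}u_\beta(x,t)=\frac1{\Gamma(1-\beta)}\int_0^t(t-\tau)^{-\beta}\,\partial_\tau u_\beta(x,\tau)\,d\tau\le 0$. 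An alternative, more structural, derivation — the one that actually uses \eqref{Prop2.2} — is to write $u_\beta(x,t)=\int_0^\infty w(x,s)\,\ee_\beta(s,t)\,ds$ with $w(x,s):=\inf_{z\in Y}\{sL(\tfrac{f(x)-\pi^{-1}(z)}{s})+g(z)\}$ the deterministic intrinsic Hopf--Lax function (so that $w$ is the ``$\beta=1$'' object studied in \cite{D22Hopf,D22Hopf2}), bring $\partial^\beta_{(0,t]}$ inside the integral, substitute \eqref{Prop2.2}, integrate by parts in $s$, and check that the boundary terms vanish or cancel — at $s=+\infty$ because $\ee_\beta(\cdot,t)$ decays while $w(x,\cdot)$ is at most linear, at $s=0^+$ because it is matched by the $\delta_0$-term in \eqref{Prop2.2} once $w(x,0^+)=g(x)$ is used — leaving $\partial^\beta_{(0,t]}u_\beta(x,t)=\int_0^\infty\partial_sw(x,s)\,\ee_\beta(s,t)\,ds$, which is $\le 0$ because $s\mapsto sL(v/s)$ is non-increasing (exactly what underlies \eqref{equationPorpL.6giugno}). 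Adding $Du_\beta(x,t)\cdot q=\int_0^\infty D_xw(x,s)\cdot q\,\ee_\beta(s,t)\,ds$ and applying the ``$\beta=1$'' subsolution inequality of \cite{D22Hopf,D22Hopf2} under the integral sign (again using $E_t\ge 1$), this second route gives the full inequality in one stroke.

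\textbf{Step (ii).} Fix $q\in Y$. For small $r>0$ set $x_r:=\pi(f(x)+r\,f(q))$, so that $x_0=\pi(f(x))=x$ because $f$ is a section of $\pi$. Using that $f$ is a \emph{sublinear} section one obtains that $r\mapsto x_r$ is differentiable at $0^+$ with $f(x_r)=f(x)+r\,f(q)+o(r)$ and $\dot x_0=q$, so that $\tfrac{d}{dr}\big|_{0^+}u_\beta(x_r,t)=Du_\beta(x,t)\cdot q$ (here the existence of $Du_\beta(x,t)$ is used). Proposition~\ref{prop2.6}(1) then gives $u_\beta(x_r,t)-u_\beta(x,t)\le C\sqrt L\big(\tfrac{f(x_r)-f(x)}{t}\big)=C\sqrt L\big(\tfrac rt f(q)+o(r)\big)$; dividing by $r$, letting $r\to0^+$, and using that $\sqrt L$ is positively homogeneous and radially non-decreasing together with $E_t\ge 1$ (so the scaling factor $1/E_t\le 1$ may be discarded from the argument of $\sqrt L$) yields $Du_\beta(x,t)\cdot q\le C\sqrt L(f(q))$, i.e.\ (ii).

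\textbf{The main obstacle.} Step (i) is essentially bookkeeping: in the structural variant the only nontrivial points are that $\partial^\beta_{(0,t]}$ may be taken inside the $\ee_\beta$-average and that \eqref{Prop2.2}, a distributional identity, may be tested against the Lipschitz, linearly growing function $w(x,\cdot)$ — both handled by the regularity in Proposition~\ref{prop2.6} and the known behaviour of $\ee_\beta(\cdot,t)$ near the origin. The real work is Step (ii): one must land exactly on the constant $C$ of \eqref{equationPorpL} and on the argument $f(q)$ — not $q$, not $f(q)/E_t$, not an $o(1)$-perturbation thereof. Controlling the geometry so that $f(\pi(f(x)+r\,f(q)))$ agrees with $f(x)+r\,f(q)$ to first order (this is the content of ``sublinear section'') and so that $u_\beta(\cdot,t)$ is differentiable along precisely these curves with directional derivative equal to $Du_\beta(x,t)\cdot q$, and then using $E_t\ge 1$ to absorb the scaling $1/E_t$ (without which the natural estimate carries $\sqrt L(f(q)/E_t)$, uncontrollable for e.g.\ $L(v)=\tfrac12|v|^2$), is where the difficulty is concentrated.
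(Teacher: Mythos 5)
Your route is genuinely different from the paper's. The paper does not split the inequality: it runs the controlled trajectory $X(s)=x-(E_t-E_s)q$ up to the stopping time $\tau_h$, feeds it into the dynamic programming inequality \eqref{equation2.8} of Lemma \ref{Lemma2.4}, applies Ito's formula for the time-changed process, and then converts the resulting $dE_s$-integral into the Caputo derivative via the distributional equation \eqref{Prop2.2} for $\ee_\beta$, a Riemann--Liouville integration by parts, and a final application of the fractional integral $I_{(0,t]}^{1-\beta}$. Your Step (i) is correct: it is exactly the monotonicity $u_\beta(y,t)\le u_\beta(y,s)$ established inside the proof of Proposition \ref{prop2.6}(3), combined with the positivity of the Caputo kernel and the absolute continuity noted in the Remark. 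But observe that your scheme proves the strictly stronger pair $\partial^\beta_{(0,t]}u_\beta\le 0$ and $Du_\beta(x,t)\cdot q\le C\sqrt L(f(q))$; the second inequality is never established in the paper and is not a consequence of the theorem, so it must carry its own full proof.

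That is where the gap is. The curve $x_r=\pi(f(x)+r\,f(q))$ cannot simultaneously satisfy $f(x_r)=f(x)+r\,f(q)+o(r)$ and $\dot x_0=q$ without extra structure: differentiating the first relation gives $Df(x)\dot x_0=f(q)$, while the chain rule gives $\dot x_0=D\pi(f(x))f(q)$, and $\pi\circ f=\mathrm{id}_Y$ only yields $D\pi(f(x))Df(x)=I$, not $D\pi(f(x))f(q)=q$ nor $Df(x)q=f(q)$. So the directional derivative you actually compute is $Du_\beta(x,t)\cdot D\pi(f(x))f(q)$, not $Du_\beta(x,t)\cdot q$. The natural repair is to move along $x+rq$ itself (mirroring the paper's trajectory, whose spatial increment is a multiple of $q$) and to invoke sublinearity in the form $f(x+rq)-f(x)=r\,f(q)+o(r)$ --- the same unstated reading of ``sublinear'' that the paper uses to pass from \eqref{equation2.8} to \eqref{equation3.4}. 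Even then, extracting the factor $r$ from $\sqrt L\bigl(\tfrac{r f(q)+o(r)}{t}\bigr)$ requires $\sqrt L$ to be positively $1$-homogeneous, which is not among the standing assumptions \eqref{equationPorpL}--\eqref{equationPorpL.6giugno} on $L$ (it holds for $L(v)=\tfrac12|v|^2$ but not in general); the paper sidesteps this because Lemma \ref{Lemma2.4} already places the factor $E_t-E_s$ outside $\sqrt L$ by construction. As written, Step (ii) does not close, so the proposal is incomplete precisely at the point you yourself flag as ``the real work.''
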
 

We record some preliminary observations.

    \begin{lem}\label{Lemma2.4}
    For any $y\in Y$ and any $s,t>0$ such that $s<t$ and $E_t \geq 1$ we have
    \begin{equation*}
        u_\beta (x,t)\leq \E _{x,t} \left[ \min_{y\in Y} \left( C (E_t-E_s) \sqrt{L}\left( \frac{f(x)-f(y)}{E_t-E_s}\right) +u_\beta (y,s)\right)\right],
    \end{equation*}  
    where $C>0$ is given by \eqref{equationPorpL}.
\end{lem}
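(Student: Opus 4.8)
The plan is to prove a one-step (sub-optimality) dynamic-programming principle, in the spirit of the classical Hopf--Lax semigroup property but respecting the random internal clock $E$. Fix $y\in Y$ and $0<s<t$, and work under the hypothesis $E_t\geq 1$. First I would compare, at the level of the integrands appearing in \eqref{equationu21.24}, the cost for $(x,t)$ with the cost for $(y,s)$ plus a transport term from $x$ to $y$ over the increment $E_t-E_s$; then I would pass to the infimum in $z$, minimise in $y$, and take $\E_{x,t}$.

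The pointwise comparison is the chain
\begin{align*}
E_t\, L\!\left(\frac{f(x)-\pi^{-1}(z)}{E_t}\right)
&\leq E_t\, L\!\left(\frac{f(y)-\pi^{-1}(z)}{E_t}\right)+C\sqrt{L}\!\left(\frac{f(x)-f(y)}{E_t}\right)\\
&\leq E_s\, L\!\left(\frac{f(y)-\pi^{-1}(z)}{E_s}\right)+C\sqrt{L}\!\left(\frac{f(x)-f(y)}{E_t}\right)\\
&\leq E_s\, L\!\left(\frac{f(y)-\pi^{-1}(z)}{E_s}\right)+C(E_t-E_s)\sqrt{L}\!\left(\frac{f(x)-f(y)}{E_t-E_s}\right),
\end{align*}
valid for every $z\in Y$. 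The first inequality is the integrand version of \eqref{equationPorpL}; for the model map $L(v)=\tfrac12|v|^2$ it is precisely the pointwise estimate $d^2(f(x),\pi^{-1}(z))-d^2(f(y),\pi^{-1}(z))\leq 2K\,d(f(x),f(y))$ used in the discussion of that model, divided by $2E_t$. The second inequality is the integrand version of \eqref{equationPorpL.6giugno}, that is, the monotonicity of $r\mapsto r\,L(\,\cdot/r)$ combined with $E_s\leq E_t$. The third inequality is the only step that uses $E_t\geq 1$: by the (sub)homogeneity of $\sqrt{L}$ — for $L(v)=\tfrac12|v|^2$ one has $(E_t-E_s)\sqrt{L}\!\left(\frac{w}{E_t-E_s}\right)=E_t\,\sqrt{L}\!\left(\frac{w}{E_t}\right)\geq \sqrt{L}\!\left(\frac{w}{E_t}\right)$ precisely because $E_t\geq 1$.

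Adding $g(z)$ to both sides and taking the infimum over $z\in Y$ (the transport term does not depend on $z$) yields, for every fixed $y\in Y$,
\begin{equation*}
\inf_{z\in Y}\!\left\{E_t\, L\!\left(\frac{f(x)-\pi^{-1}(z)}{E_t}\right)+g(z)\right\}\leq \inf_{z\in Y}\!\left\{E_s\, L\!\left(\frac{f(y)-\pi^{-1}(z)}{E_s}\right)+g(z)\right\}+C(E_t-E_s)\sqrt{L}\!\left(\frac{f(x)-f(y)}{E_t-E_s}\right).
\end{equation*}
Minimising the right-hand side over $y\in Y$ — the minimum is attained since $Y$ is compact and all the data are continuous — and then applying $\E_{x,t}[\,\cdot\,]$ to both sides, the left-hand side becomes $u_\beta(x,t)$ by \eqref{equationu21.24}, while the inner infimum on the right is the intrinsic Hopf--Lax functional whose $\E_{x,t}$-expectation is $u_\beta(y,s)$, which gives the assertion. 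The main obstacle will be the bookkeeping of the order in which $\inf_z$, $\min_y$ and $\E_{x,t}$ are performed: the argument has to be run through the pointwise (integrand) forms of \eqref{equationPorpL}--\eqref{equationPorpL.6giugno} so that $\E_{x,t}$ stays outermost until the last line, a measurable selection of the minimising $z$ (equivalently, the continuity and compactness already invoked) makes the intermediate $\min_y$ inside $\E_{x,t}$ legitimate, and the identification of the inner Hopf--Lax functional with $u_\beta(y,s)$ is the one built into the convention of \eqref{equationu21.24}.
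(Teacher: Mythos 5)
Your proposal follows essentially the same route as the paper's proof: pick a (near\mbox{-})minimising $z$ for the problem at $(y,s)$, use \eqref{equationPorpL} to transport from $f(x)$ to $f(y)$, use \eqref{equationPorpL.6giugno} together with the monotonicity of $E$ to pass from the scale $E_t$ to $E_s$, and use $E_t\ge 1$ to absorb the normalisation of the transport term. The one substantive difference is that you run the whole chain at the level of integrands and apply $\E_{x,t}$ only at the end; this requires pointwise versions of \eqref{equationPorpL} and \eqref{equationPorpL.6giugno}, which the paper states only under the expectation. You flag this honestly, and in fact the paper's own proof also silently invokes a stronger, weighted form of \eqref{equationPorpL} (with the factors $\frac{E_t-E_s}{E_t}$ and $\frac{E_s^2}{E_t}$ appearing inside the expectation), so neither argument runs literally on the stated hypotheses; your decomposition into ``\eqref{equationPorpL} at scale $E_t$, then rescale the $L$-term, then use $E_t\ge 1$ and the $1$-homogeneity of $r\mapsto r\sqrt L(\cdot/r)$'' is if anything the more transparent of the two.

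The step that does not follow --- and which you correctly identify as the main obstacle, but then dismiss too quickly --- is the last one. After minimising over $y$ pointwise and taking $\E_{x,t}$, the quantity sitting inside $\E_{x,t}[\min_y(\cdots)]$ is the random variable $\inf_{z\in Y}\{E_s L((f(y)-\pi^{-1}(z))/E_s)+g(z)\}$, not the number $u_\beta(y,s)$, which is its expectation. Replacing a random variable by its expectation inside $\E[\min_y(\cdot)]$ is not an inequality in the favourable direction: the only general relation is $\E[\min_y(\cdot)]\le\min_y\E[\cdot]$, and one can build two-point examples with $\E[\min_y(A_y+B_y)]>\E[\min_y(\E[A_y]+B_y)]$ when $A$ and $B$ are correlated, as they are here (both are functionals of the same path of $E$). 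A measurable selection argument addresses only the measurability of the minimiser, not this interchange. For what it is worth, the paper's proof has the mirror image of the same gap: it establishes the inequality for each fixed $y$, i.e. $u_\beta(x,t)\le\min_{y}\E_{x,t}[\cdots]$, and then asserts the stronger statement with $\min_y$ inside the expectation. So your argument is no weaker than the paper's, but this final interchange is the point that would need a genuine additional argument (or a reformulation of the lemma with $\min_y$ outside the expectation, which is what both chains actually deliver for deterministic $y$).
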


\begin{proof}
    Fix $(x,t) \in \R^+$ and $s<t$ such that $E_t \geq 1.$ For $y\in Y$ let $Z:\Omega \to \R^\kappa$ be a random variable such that
    \begin{equation*}
        u_\beta (y,s)= \E _{x,t} \left[  E_s L\left( \frac{f(y)-\pi^{-1}(Z)}{E_s}\right) + g(Z)\right].
    \end{equation*}
    We deduce that
\begin{equation*}
\begin{aligned}
u(x,t) & \leq   \E _{x,t} \left[  E_t L\left( \frac{f(x)-\pi^{-1}(Z)}{E_t}\right) + g(Z)\right] \\
 &  \leq  \E _{x,t} \left[ C \frac{E_t- E_s}{E_t} \sqrt{L}\left( \frac{f(x)-f(y)}{E_t- E_s}\right)+ \frac{E_s^2}{E_t} L\left( \frac{f(y)-\pi^{-1}(Z)}{E_s}\right) + g(Z)\right] \\
& \leq \E _{x,t} \left[ C (E_t- E_s) \sqrt{L}\left( \frac{f(x)-f(y)}{E_t- E_s}\right)+ u_\beta (y,s)\right],
\end{aligned}
\end{equation*} where in the second inequality we used \eqref{equationPorpL} and in the third one we used the facts that  $E_t \geq 1$ and $E_t\geq E_s$ because $E_t$ is non decreasing.  This inequality is true for every $y\in Y$ and so we get the thesis, as wished.
\end{proof}

\begin{rem}
    Arguing as in Lemma \ref{Lemma2.4}, it is also possible to prove that if $\tau :\Omega \to (0,t)$  is a stopping time such that $E_t \geq 1$ then
    \begin{equation}\label{equation2.8}
        u_\beta (x,t)\leq \E _{x,t} \left[ \min_{y\in Y} \left( C (E_t-E_\tau ) \sqrt{L}\left( \frac{f(x)-f(y)}{E_t-E_\tau }\right) +u_\beta (y,\tau)\right)\right],
    \end{equation} where $C>0$ is given by \eqref{equationPorpL}.
\end{rem}

\begin{proof}[Proof of Theorem \ref{Teorema3.2}]
    Fix $q\in \R^\kappa $ and $h >0$. Consider the control law $\alpha (s) \equiv q$. Then the solution $X(s)$ of \eqref{cauchypb1.7} is given by $X(s)= x - (E_t - E_s)q$. Define the stopping time
\begin{equation*}
    \tau _h =\sup \{ s\in (t-h, t)\,:\, |X(s)-x|=h \}.
\end{equation*}
By \eqref{equation2.8} for $\tau= \tau_h$ and $y=X(\tau _h),$ we have
\begin{equation*}
        u_\beta (x,t)\leq \E _{x,t} \left[ \left( C (E_t-E_{\tau _h} ) \sqrt{L}\left( \frac{f(x)-f(X(\tau _h))}{E_t-E_{\tau _h} }\right) +u_\beta (X(\tau _h),\tau _h)\right)\right].
    \end{equation*}  Since  $f$ is sublinear
\begin{equation}\label{equation3.4}
       \E _{x,t} \left[ u_\beta (x,t) - u_\beta (X(\tau _h),\tau _h)  \right] \leq C \sqrt{L}(f(q))\E _{x,t} \left[   E_t-E_{\tau _h}  \right].
    \end{equation}
    By Ito's formula \cite{K11}, we also have
\begin{equation*}
\begin{aligned}
    \E_{x,t} \left[ u_\beta (x,t) - u_\beta (X(\tau _h),\tau _h)  \right] & = \E_{x,t} \left[ \int_ {\tau _h}^t du_\beta (X(s),s) \, ds \right] \\
&    =  \E_{x,t} \left[ \int_ {\tau _h}^t \partial _s u_\beta (X(s),s) \, ds + \int_ {\tau _h}^t D u_\beta (X(s),s) \, dX(s) \right]\\
&   = \E_{x,t} \left[ \int_ {\tau _h}^t \partial _s u_\beta (X(s),s) \, ds + \int_ {\tau _h}^t D u_\beta (X(s),s)\cdot q \, dE_s \right].\\
    \end{aligned}
\end{equation*}
Therefore, recalling \eqref{equation3.4},
\begin{equation*}
\begin{aligned}
    & \E_{x,t} \left[ \int_ {\tau _h}^t \partial _s u_\beta (X(s),s) \, ds + \int_ {\tau _h}^t D u_\beta (X(s),s)\cdot q  -C \sqrt{L}(f(q))\, dE_s \right] \\
    & = \E_{x,t} \Big[ \int_ {\tau _h}^t \partial _s u_\beta (X(s),s) \, ds \\
    & \quad \qquad + \int _0^\infty \left( \int_ {0}^r D u_\beta (Y(s),D_s)\cdot q  -C \sqrt{L}(f(q))\, ds \right)   (\ee _\beta (r,t) - \ee_\beta (r, \tau _h))dr \Big] \\
    & \leq 0 
    \end{aligned}
\end{equation*}
where $D_s$ is the inverse of $E_t$, i.e. $E_{D_s}=s$ and $\ee _\beta (s,t)$ is the PDF of $E_t$ as in \eqref{equationPDF}. Dividing the previous inequality by $h$ and passing to the limit for $h\to 0 ^+$, by the Dominated Convergence Theorem we get
\begin{equation}\label{eq3.6}
 \partial _t u_\beta (x,t) + \E_{x,t} \left[  \int _0^\infty \left( \int_ {0}^r D u_\beta (Y(s),D_s)\cdot q  -C \sqrt{L}(f(q))\, ds \right)  \partial _r \ee_\beta (r,t)dr  \right] \leq 0.
\end{equation}
Set $\Phi (r)= \int_ {0}^r D u_\beta (Y(s),D_s)\cdot q  -C \sqrt{L}(f(q))\, ds . $ Since $\ee _\beta $ is a solution of \eqref{Prop2.2}, then we have (see \cite[Lemma 4.2]{cdm19})
\begin{equation*}
    \partial _t \ee _\beta (r,t) =-D_{(0,t]}^{1-\beta } (\partial _r \ee _\beta (r,t)) -\delta _0(r)\delta _0(t), \quad r\in (0,+\infty),
\end{equation*}
where $D_{(0,t]}^{1-\beta }$ is the Riemann-Liouville derivative of order $1-\beta $, which is defined for a continuous function $\phi :[0,t] \to \R$ by
\begin{equation*}
    D_{(0,t]}^{1-\beta } \phi (t):= \frac{1}{\Gamma (\beta )} \frac{d}{dt} \int _0^t  \frac{\phi (\tau )}{(t-\tau )^{1-\beta } } \, d\tau .
\end{equation*}
Therefore, noting $\Phi (0)=0$
\begin{equation*}
    \begin{aligned}
        \E_{x,t} \left[  \int _0^\infty \Phi (r)  \partial _r\ee _\beta (r,t)dr \right] & = - \E_{x,t} \left[ \int _0^\infty \Phi (r) D_{(0,t]}^{1-\beta } \partial _r \ee _\beta (r,t)  \right] -\Phi (0)\delta _0(t)\\
        & = - \E_{x,t} \left[D_{(0,t]}^{1-\beta } \left( \int _0^\infty \Phi (r)  \partial _r \ee _\beta (r,t) \right) \right]\\
         & = - \E_{x,t} \left[D_{(0,t]}^{1-\beta } \left( [\Phi (r) \ee_b\eta (r,t)]_0^{+\infty } -  \int _0^\infty \partial _r\Phi (r)  \ee _\beta \eta (r,t) \right) \right].\\
    \end{aligned}
\end{equation*}
Since $\lim_{r\to \infty }\ee _\beta (r,t ) =0$ and $\partial _r \Phi (r)= D u_\beta (Y(r),D_r)\cdot q  -C \sqrt{L}(f(q))$, we deduce that
\begin{equation*}
    \begin{aligned}
        \E_{x,t} \left[  \int _0^\infty \Phi (r)  \partial _r\ee _\beta (r,t)dr \right] & = \E_{x,t} \left[D_{(0,t]}^{1-\beta } \left(  \int _0^\infty  \ee _\beta (r,t) (D u_\beta (Y(r),D_r)\cdot q  -C \sqrt{L}(f(q))) \right) \right]\\
        & = \E_{x,t} \left[D_{(0,t]}^{1-\beta } (D u_\beta (Y(t),D_t)\cdot q  -C \sqrt{L}(f(q))) \right]\\
        & = D_{(0,t]}^{1-\beta }( D u_\beta (x,t)\cdot q  -C \sqrt{L}(f(q)))
    \end{aligned}
\end{equation*}
Replacing the last equality in \eqref{eq3.6}, we get
\begin{equation*}
   \partial_t u_\beta (x,t) + D_{(0,t]}^{1-\beta }( D u_\beta (x,t)\cdot q  -C \sqrt{L}(f(q))) \leq 0 .
\end{equation*}
Applying the fractional integral $I_{(0,t]}^{1-\beta } \phi (t) =\frac{1}{\Gamma (1-\beta )} \int _0^t  \frac{\phi (\tau )}{(t-\tau )^{\beta } } \, d\tau $ to  the previous equation, we finally get the thesis. 
\end{proof}

\subsection{Declarations} There are no conflict of interests/competing interests.


\begin{thebibliography}{9}
\bibitem{Berestovski}Valerii Nikolaevich Berestovskii. Homogeneous manifolds with intrinsic metric. Sib Math J, I(29), 887–897, 1988.
 \bibitem{MR1736929} S. Bates, W. B. Johnson, J. Lindenstrauss, D. Preiss, and G. Schechtman. Affine approximation of Lipschitz functions and nonlinear quotients. Geom. Funct. Anal., 9(6), 1092–1127, 1999.
 \bibitem{CMI19} F. Camilli, R.D. Maio, and E. Iacomini. A Hopf-Lax formula for Hamilton-Jacobi equations with Caputo time-fractional derivative. Journal of Mathematical Analysis and Applications, 477(2), 1019–1032, 2019
 \bibitem{cdm19} F. Camilli and R. De Maio. A time-fractional mean field game. Adv. Differential Equations, 24(3), 531–554, 2019.
\bibitem{D22Hopf} Daniela Di Donato. The intrinsic Hopf-Lax semigroup vs. the intrinsic slope. Journal of Mathematical Analysis and Applications, Volume 523, Issue 2, 15 July 2023, 127051, 2023.
\bibitem{D22.88} Daniela Di Donato. Non-symmetric intrinsic Hopf-Lax semigroup vs. intrinsic Lagrangian. preprint, 2023.
\bibitem{D22Hopf2} Daniela Di Donato. Intrinsic Hopf-Lax formula and Hamilton-Jacobi equation. 2024.
\bibitem{DDLD21} Daniela Di Donato and Enrico Le Donne. Intrinsically Lipschitz sections and applications to metric groups. Communications in Contemporary Mathematics, 2024.
\bibitem{E10} Lawrence C. Evans. Partial Differential Equations. American Mathematical Soc. II edition, 2010.
\bibitem{GN17A} Y. Giga and T. Namba. Well-posedness of Hamilton-Jacobi equations with Caputo’s time fractional derivative. Comm. Partial Differential Equations, 42(7), 1088–1120, 2017.
\bibitem{K11} K. Kobayashi. Stochastic calculus for a time-changed semimartingale and the associated stochastic differential equations. J. Theoret. Probab., 24(3), 789–820, 2011.
\bibitem{KST06} A. Kilbas, H. M. Srivastava, and J. J. Trujillo. Theory and applications of fractional differential equations. Elsevier, Amsterdam, 2006.
\bibitem{LL18} L. Li and J. Liu. Some compactness criteria for weak solutions of time fractional PDEs. SIAM J. Math. Anal., 50, 3963–3995, 2018.
\bibitem{MGZ14} M. Magdziarz, J. Gajda, and T. Zorawik. Comment on fractional Fokker-Planck equation with space and time dependent drift and diffusion. J. Stat. Phys., 154(5):1241–1250, 2014.
\bibitem{MS04} M. Meerschaert and H.P. Scheffler. Limit theorems for continuous-time random walks with infinite mean waiting times. J. Appl. Probab., 41:623–638, 2004.
\bibitem{MS13} M. Meerschaert and P. Straka. Inverse stable subordinators. Math. Model. Nat. Phenom., 8(2):1-16, 2013.
\bibitem{P99} I. Podlubny. Fractional differential equations. Academic Press, San Diego, CA, 1999.
\bibitem{SC16}  Francesco Serra Cassano. Some topics of geometric measure theory in Carnot groups. In Geometry, analysis and dynamics on sub-Riemannian manifolds. Vol. 1, EMS Ser. Lect. Math., pages 1–121. Eur. Math. Soc., Zürich, 2016.
\bibitem{TY17} E Topp and M. Yangari. Existence and uniqueness for parabolic problems with Caputo time derivative. Comm. Partial Differential Equations, 262(12):6018–6046, 2017.
 \end{thebibliography}
\end{document}